\documentclass[preprint]{imsart} %needed for Bernoullo
\usepackage{amsmath, latexsym, amsthm, amsfonts,bm,amssymb} % Useful packages.
\usepackage[dvipsnames]{xcolor}
\usepackage{appendix,todonotes}

\usepackage{natbib} % Package for the bibliography.
\usepackage{url} % Package for embedding url.
\usepackage{enumerate}
\usepackage{mathtools} % mathclap 

\usepackage{natbib} % Package for the bibliography. 
\bibpunct{(}{)}{;}{a}{}{,} % Punctuation for citations. 

% My definitions for theorems etc.
%\doublespacing
\theoremstyle{plain}
\newtheorem{thm}{Theorem}
\newtheorem{prop}[thm]{Proposition}
\newtheorem{lemma}[thm]{Lemma}

\usepackage[normalem]{ulem}

\theoremstyle{remark}

% List enumeration style.

% Commands used in the text.

\newcommand{\bcdot}{\boldsymbol{\cdot}}

\newcommand{\half}{\frac{1}{2}}

\newcommand{\pp}{\mathbb{P}}
\renewcommand{\qq}{\mathbb{Q}}
\newcommand{\ee}{\mathbb{E}}
\newcommand{\rr}{\mathbb{R}}
\newcommand{\cf}{\mathcal{F}}

\newcommand{\dd}{\mathrm{d}}
\newcommand{\Pp}{\mathbb{P}}

\newcommand{\fpspace}{(\Omega,\mathcal{F},\mathbb{F},\Pp)}

\newcommand{\one}{\mathbf{1}}

\allowdisplaybreaks

% SHOTA'S EDIT FOLLOWS

%\usepackage[foot]{amsaddr} % Addresses on the first page

% Table inter-column spacing

\setlength{\tabcolsep}{4.5pt}

% Caption sizes

% ORCID

\usepackage{tikz,hyperref}

\definecolor{lime}{HTML}{A6CE39}
\DeclareRobustCommand{\orcidicon}{%
	\begin{tikzpicture}
	\draw[lime, fill=lime] (0,0) 
	circle [radius=0.16] 
	node[white] {{\fontfamily{qag}\selectfont \tiny ID}};
	\draw[white, fill=white] (-0.0625,0.095) 
	circle [radius=0.007];
	\end{tikzpicture}
	\hspace{-2mm}
}

\foreach \x in {A, ..., Z}{%
	\expandafter\xdef\csname orcid\x\endcsname{\noexpand\href{https://orcid.org/\csname orcidauthor\x\endcsname}{\noexpand\orcidicon}}
}

%Denis
%Shota
%Moritz
%Peter

\begin{document}

\begin{frontmatter}
\title{Weak solutions to gamma-driven stochastic differential equations}
%\title{A sample article title with some additional note\thanksref{t1}}
\runtitle{Weak solutions for gamma-driven SDEs}
%\thankstext{T1}{A sample additional note to the title.}
\runauthor{Belomestny, Gugushvili, Schauer, Spreij}

\begin{aug}
\author[A]{\fnms{Denis} \snm{Belomestny}\ead[label=e1]{denis.belomestny@uni-due.de}},
\author[B]{\fnms{Shota} \snm{Gugushvili}\ead[label=e2]{shota@yesdatasolutions.com}}
\and
\author[C]{\fnms{Moritz} \snm{Schauer}\ead[label=e3]{smoritz@chalmers.se}}
\and
\author[D]{\fnms{Peter} \snm{Spreij}\ead[label=e4]{spreij@uva.nl}}
%%%%%%%%%%%%%%%%%%%%%%%%%%%%%%%%%%%%%%%%%%%%%%
%% Addresses                                %%
%%%%%%%%%%%%%%%%%%%%%%%%%%%%%%%%%%%%%%%%%%%%%%
\address[A]{Department,
Faculty of Mathematics\\
Duisburg-Essen University\\
Thea-Leymann-Str.~9\\
D-45127 Essen\\
Germany\\
and Faculty of Computer Sciences\\
HSE University\\
Moscow, Russian Federation
\printead{e1}}

\address[B]{Biometris\\
	Wageningen University \& Research\\
	Postbus 16\\
	6700 AA Wageningen\\
	The Netherlands
\printead{e2}}

\address[C]{Department of Mathematical Sciences\\
	Chalmers University of Technology and University of Gothenburg\\
	SE-412 96 G\"{o}teborg\\
	Sweden
\printead{e3}}

\address[D]{Korteweg-de Vries Institute for Mathematics\\
Universiteit van Amsterdam\\
P.O.\ Box 94248\\
1090 GE Amsterdam\\
The Netherlands\\
and Institute for Mathematics, Astrophysics and Particle Physics\\
Radboud University\\
Nijmegen\\
Netherlands
\printead{e4}}

\end{aug}

\begin{abstract}
We study a stochastic differential equation driven by a gamma process, for which we give results on the existence of weak solutions under conditions on the volatility function. To that end we provide results on the density process between the laws of solutions with different volatility functions.
\end{abstract}

\begin{keyword}
\kwd{Gamma process}
\kwd{Stochastic differential equation}
\end{keyword}

\end{frontmatter}

\section{Introduction}
%\subsection{Problem formulation}

The goal of the present paper is to give conditions such that the L\'evy-driven stochastic differential equation 
\begin{equation} \label{eq:sde}
\dd X_t=\sigma(X_{t-})\,\dd L_t,\, X_0= 0 %X_0=L_0
\end{equation}
has a weak solution that is unique in law.
Here
$L$ is a gamma process with $L_0=0$ and therefore $L$ is a subordinator, i.e.\ a stochastic process with monotonous sample paths. Furthermore, $L$ has
a L\'evy measure $\nu$ admitting the L\'evy density
\begin{equation}\label{eq:v}
v(x)= \alpha x^{-1}\exp(-\beta x),\, x>0,
\end{equation}
where $\alpha$ and $\beta$ are two positive constants. The process $L$ has independent increments,  and $L_t-L_s$ has a $\operatorname{Gamma}(\alpha (t-s),\beta)$ distribution for $t>s$. 
Recall that the $\operatorname{Gamma}(a,b)$ distribution has a density given by
$
x \mapsto \frac{b^a}{\Gamma(a)} x^{a-1}e^{-b x}$ for $x>0$.
%where $\Gamma$ is the gamma function.
%with shape parameter $\alpha$ and scale parameter $1/\beta$. Recall the gamma distribution with shape parameter $a>0$ and rate parameter $b>0$  (hence scale $1/b$), denoted $\operatorname{Gamma}(a,b)$, which has a density given by
%\[
%x \mapsto \frac{b^a}{\Gamma(a)} x^{a-1}e^{-b x}, \quad x>0,
%\]
%where $\Gamma$ is the gamma function.

Under the assumption that the function $\sigma$ (in view of financial applications we refer to it as volatility function) is measurable, positive and satisfies a linear growth condition, we will see in Theorem~\ref{thm:weak} that Equation~\eqref{eq:sde} admits a weak solution that is unique in law. This is the main result of the present paper.
Note that under the stronger condition that $\sigma$ is 
Lipschitz continuous, Equation~\eqref{eq:sde} even has a unique strong solution, see~\cite[Theorem~V.6]{Protter2004}. 
%Note that $X$ is a Markov process. 

%\section{Synthetic data examples}\label{sec:simulations}

%\subsection{Motivation}
We will briefly outline the relevance of gamma processes. They form a special class of L\'evy processes (see, e.g., \cite{kyprianou14}), are a fundamental modelling tool in several fields, e.g. reliability (see \cite{noortwijk09}) and risk theory (see \cite{dufresne91}). Since the driving gamma process $L$ in \eqref{eq:sde} has non-decreasing sample paths and the volatility function $\sigma$ is non-negative, also the process $X$ has non-decreasing sample paths. Such processes find applications across various fields. A reliability model as in \eqref{eq:sde} has been thoroughly investigated from a probabilistic point of view in \cite{wenocour89}, and constitutes a far-reaching generalisation of a basic gamma model. Furthermore, non-decreasing processes are ideally suited to model revenues from an innovation: in \cite{chance08}, the authors study the question of pricing options on movie box office revenues that are modelled through a gamma-like stochastic process. Another potential application is in modelling the evolution of forest fire sizes over time, as in \cite{reed02}.

Any practical application of the model \eqref{eq:sde} would require knowledge of the volatility function $\sigma$, that has to be inferred from observations on the process $X$. This is  a statistical problem to which we present a nonparametric Bayesian approach in \cite{bgsslevy}. The obtained results in this paper assume either a piecewise constant volatility function $\sigma$ or a H\"older continuous one. In both cases one needs existence of weak solutions to \eqref{eq:sde} and a likelihood ratio. The present paper covers those two cases and provides the foundations of the statistical analysis.
For a survey of other contributions to statistical inference for L\'evy driven SDEs we also refer to \cite{bgsslevy}.

\section{Absolute continuity and likelihood}
\label{sec:likelihood}

In the proof our main result, Theorem~\ref{thm:weak}, we need the likelihood ratio between different laws of solutions to \eqref{eq:sde}. In this section we give the relevant results.

Let $\fpspace$ be a filtered probability space and let $(L_t)_{t\geq 0}$ be a gamma process adapted to $\mathbb{F}$, whose L\'evy measure admits the density $v$ given by \eqref{eq:v}.
Assume that $X$ is a (weak) solution to \eqref{eq:sde},
and assume that $X$ is observed on an interval $[0,T]$. We denote by $\pp^\sigma_T$ (a probability measure on $\cf^X_T=\sigma(X_t, 0\leq t\leq T)$) its law. In agreement with this notation we let $\pp^1_T$ be the law of $X$ when $\sigma\equiv1$, in which case $X_t=L_t,\, t\in [0,T]$. The measure $\pp^1_T$ will serve as a reference measure. The choice $\sigma=1$ for obtaining a reference measure is natural, but also arbitrary. Many other choices for the function $\sigma$ are conceivable, in particular other constant functions.
The question we are going to investigate first is under which conditions the laws $\pp^\sigma_T$ and $\pp^1_T$ are equivalent. 
Suppose that the process $\sigma(X_{t-}),$ $t\in [0,T]$ is strictly positive and define
\begin{equation}\label{eq:vt}
v^\sigma(t,x)=\frac{1}{\sigma(X_{t-})}v\left(\frac{x}{\sigma(X_{t-})}\right).
\end{equation}
First we show that for $X$ given by \eqref{eq:sde}, its compensated jump measure under $\pp^\sigma_T$ is determined by \eqref{eq:vt}.
\begin{lemma}\label{lemma:vsigma}
Assume that \eqref{eq:sde} admits a weak solution for a given measurable function $\sigma$ with $\sigma(X_{s-})>0$ a.s. for all $s\geq 0.$
Under the measure $\pp^\sigma_T$, the third characteristic of the semimartingale $X$, its compensated jump measure $\nu^\sigma(\dd x,\dd t)$, is given by $\nu^\sigma(\dd x,\dd t)=v^\sigma(t,x)\, \dd x\dd t$.
\end{lemma}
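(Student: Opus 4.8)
The plan is to compute the compensated jump measure of $X$ directly from the jump structure of the equation $\dd X_t = \sigma(X_{t-})\,\dd L_t$. Since $L$ is a pure-jump subordinator (a gamma process), the jumps of $X$ are $\Delta X_t = \sigma(X_{t-})\,\Delta L_t$, so $X$ is a pure-jump semimartingale whose (random) jump measure $\mu^X$ is the image of the jump measure $\mu^L$ of $L$ under the map $(t,y)\mapsto (t, \sigma(X_{t-})y)$. First I would recall that under $\pp^1_T$ the process $L=X$ has jump measure $\mu^L$ with compensator (predictable) $\nu(\dd y)\dd t = v(y)\,\dd y\,\dd t$, where $v$ is given by \eqref{eq:v}. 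The compensator of $\mu^X$ is then obtained by the same change of variables applied to the compensator: for a nonnegative predictable function $W(t,x)$,
\begin{equation*}
\ee^{\pp^\sigma_T}\!\int_0^T\!\!\int_{(0,\infty)} W(t,x)\,\mu^X(\dd x,\dd t)
= \ee^{\pp^\sigma_T}\!\int_0^T\!\!\int_{(0,\infty)} W\big(t,\sigma(X_{t-})y\big)\,v(y)\,\dd y\,\dd t,
\end{equation*}
and substituting $x=\sigma(X_{t-})y$ inside the $\dd y$ integral (valid since $\sigma(X_{t-})>0$) turns the right-hand side into $\ee^{\pp^\sigma_T}\int_0^T\!\int_{(0,\infty)} W(t,x)\,v^\sigma(t,x)\,\dd x\,\dd t$ with $v^\sigma$ as in \eqref{eq:vt}. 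By the definition of the predictable compensator (third characteristic), this identifies $\nu^\sigma(\dd x,\dd t) = v^\sigma(t,x)\,\dd x\,\dd t$.

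To make this rigorous I would invoke the general result on semimartingale characteristics under mappings — namely, if $X = H\cdot L$ is a stochastic integral of a predictable integrand against a semimartingale $L$ with jump-measure compensator $\nu^L$, then the jump-measure compensator of $X$ is the pushforward of $\nu^L$ under $(\omega,t,y)\mapsto (\omega,t,H_t(\omega)y)$; see Jacod–Shiryaev. The key points to check are: (i) $X$ is indeed a semimartingale and its jumps are exactly $\sigma(X_{t-})\Delta L_t$ — immediate since $L$ has finite variation and $X$ is a finite-variation process built as an integral against $L$; (ii) the map $y\mapsto \sigma(X_{t-})y$ is a bijection of $(0,\infty)$ with inverse having Jacobian $1/\sigma(X_{t-})$, giving the factor $\sigma(X_{t-})^{-1}$ in $v^\sigma$; (iii) predictability of $\sigma(X_{t-})$ as a process in $t$, which holds because $X_{t-}$ is left-continuous and adapted, hence predictable, and $\sigma$ is measurable.

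The main obstacle, such as it is, is bookkeeping rather than a genuine difficulty: one must make sure the change-of-variables step is applied path-by-path (i.e.\ $\omega$-wise, with $\sigma(X_{t-}(\omega))$ treated as a constant for fixed $(\omega,t)$) and that the resulting $v^\sigma(t,x)$ — which is random and time-dependent — is a legitimate predictable density, so that "third characteristic" is well defined. One also wants $v^\sigma$ to integrate $x\wedge 1$ against $\dd x$ locally in time, which follows from $v$ being a Lévy density and $\sigma(X_{t-})$ being finite; this is needed so that $X$ genuinely has the claimed characteristics triple $(0,0,\nu^\sigma)$ relative to a truncation function. Beyond that, the proof is a direct application of the transformation rule for characteristics.
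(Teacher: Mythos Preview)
Your proposal is correct and follows essentially the same route as the paper: both arguments use $\Delta X_t=\sigma(X_{t-})\Delta L_t$ to express integrals against $\mu^X$ as integrals against $\mu^L$, invoke that $\mu^L$ has compensator $v(y)\,\dd y\,\dd t$, and then perform the change of variable $x=\sigma(X_{t-})y$ to obtain $v^\sigma$. The only point the paper makes slightly more explicit is why the resulting compensator, a priori computed on the ambient filtration $\mathbb{F}$ under $\pp$, is in fact the $\mathbb{F}^X$-compensator under $\pp^\sigma_T$ (because the candidate compensator depends on $X$ alone); you implicitly assume this when writing $\ee^{\pp^\sigma_T}$ on both sides, so you may want to spell it out.
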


\begin{proof}
Let $f$ be a bounded measurable function and let $X$ be given by \eqref{eq:sde}.
Then 
(the summation is only for those $s$ with $\Delta X_s>0$, and $t\in [0,T]$ is arbitrary) with $\mu^L$ being the jump measure of $L$,
\[
\sum_{s\leq t}f(\Delta X_s)
=\sum_{s\leq t}f(\sigma(X_{s-})\Delta L_s)=\int_0^t\int_{(0,\infty)}f(\sigma(X_{s-})z)\mu^L(\dd z,\dd s),
\]
which is the sum of a local martingale $M$ under $\pp$, adapted to $\mathbb{F}$, and the predictable process $\int_0^t\int_{(0,\infty)}f(\sigma(X_{s-})z)v(z)\,\dd z\dd s$. As  the latter expression  only depends on the process $X$, the local martingale $M$ is also adapted to $\mathbb{F}^X=\{\cf^X_t, t\geq 0\}$.
By a simple change of variable, the double integral equals to  
\[
\int_0^t \int_{(0,\infty)}f(x)(1/\sigma(X_{s-}))v\bigl(x/\sigma(X_{s-})\bigr)\,\dd x\dd s.
\] 
Since $\pp^\sigma_T$ is the law of $X$ for $X$ given by \eqref{eq:sde}, 
the expression in the display, again depending on $X$ only, is also  the $\mathbb{F}^X$-compensator under $\pp^\sigma_T$ of $\sum_{s\leq t}f(\Delta X_s)$.
As $f$ is arbitrary, it follows that  $\nu^\sigma$ is given by \eqref{eq:vt}.
\end{proof}
Let
\[
Y(t,x):=\frac{v^\sigma(t,x)}{v(x)}=\frac{1}{\sigma(X_{t-})}v\Bigl(\frac{x}{\sigma(X_{t-})}\Bigr)/v(x).
\]
Absolute continuity of $\pp^\sigma_T$ w.r.t.\ $\pp^1_T$ is guaranteed, see~\cite[Theorem~III.5.34]{JS2003}, under the condition 
\begin{equation}\label{eq:HT}
H_T=\int_0^T \int_0^\infty(\sqrt{Y(t,x)}-1)^2v(x)\,\dd x\, \dd t<\infty,\quad  \mbox{ $\pp_T^\sigma$-a.s.}
\end{equation}
Here $H_T$ has been derived from Equation (5.7) in~\cite[Chapter III]{JS2003}. 
As the driving process $L$ is a gamma process with L\'evy density $v$ given by \eqref{eq:v}, one has in fact
\(Y(t,x)=\exp\Bigl(-\beta x\Bigl(\frac{1}{\sigma(X_{t-})}-1\Bigr)\Bigr).\)
Hence, one obtains
\begin{align}
H_T & =\alpha\int_0^T \int_0^\infty \Bigl(\exp\Bigl(-\half\beta x\Bigl(\frac{1}{\sigma(X_{t-})}-1\Bigr)\Bigr)-1\Bigr)^2x^{-1}\exp(-\beta x)\,\dd x\, \dd t\nonumber\\
& =\alpha\int_0^T \int_0^\infty\left(\exp\Bigl(-\half\frac{\beta x}{\sigma(X_{t-})}\Bigr)-\exp\Bigl(-\half\beta x\Bigr)\right)^2x^{-1}\,\dd x\, \dd t 
\nonumber \\
& =: \alpha\int_0^T h_t\,\dd t.\label{eq:h}
\end{align}
Clearly, conditions on $\sigma$ have to be imposed to have absolute continuity, or even equivalence, these are given now. Of course, we still have to assume that a weak solution to \eqref{eq:sde} exists. As already announced, sufficient conditions for this will be presented in Theorem~\ref{thm:weak}.
Below, the jump measure of $X$ is denoted $\mu^X$. 
\begin{prop}\label{prop:girsanov}
Assume that $\sigma$ is a positive locally bounded measurable function on $[0,\infty)$ such that \eqref{eq:sde} admits a weak solution unique in law.  It is furthermore assumed that $\sigma$ is lower bounded by a  constant $\sigma_0>0$. Then the laws $\pp^\sigma_T$ and $\pp^1_T$ are equivalent, and one has 
\begin{eqnarray}\label{eq:lr}
\frac{\dd\pp^\sigma_T}{\dd\pp^1_T}&=&\mathcal{E}_T\left(\int_0^{\bcdot}\int_{(0,\infty)}(Y(t,x)-1)(\mu^X(\dd x,\dd t)-v(x)\,\dd x\dd t)\right),
\end{eqnarray}
where $\mathcal{E}_T$ is the Doleans exponent at time $T$ of the process within the parentheses. In other words, $Z_T:=\frac{\dd\pp^\sigma_T}{\dd\pp^1_T}$ is the solution at time $T$ to the SDE 
\begin{equation}\label{eq:sdez}
\dd Z_t= Z_{t-}\int_{(0,\infty)}(Y(t,x)-1)(\mu^X(\dd x,\dd t)-v(x)\,\dd x\,\dd t).
\end{equation}

\end{prop}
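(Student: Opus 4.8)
The plan is to obtain the equivalence $\pp^\sigma_T\sim\pp^1_T$ by establishing the two absolute continuities separately, each via the Jacod--Shiryaev criterion \cite[Theorem~III.5.34]{JS2003}, which moreover identifies the density process $\dd\pp^\sigma_T/\dd\pp^1_T$ with the Dol\'eans exponential in \eqref{eq:lr}. Both laws are to be viewed as laws of the coordinate process on the Skorokhod space of c\`adl\`ag functions on $[0,T]$: under $\pp^1_T$ that process is the gamma subordinator $L$, with $\mathbb{F}^X$-compensator $v(x)\,\dd x\,\dd t$ of its jump measure $\mu^X$; under $\pp^\sigma_T$ the compensator is $v^\sigma(t,x)\,\dd x\,\dd t$ by Lemma~\ref{lemma:vsigma}, and $v^\sigma=Y\cdot v$ with $Y(t,x)=\exp(-\beta x(1/\sigma(X_{t-})-1))>0$. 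As both processes are pure-jump of finite variation with vanishing continuous part, they differ only through the change of jump compensator $v(x)\,\dd x\,\dd t\mapsto Y(t,x)\,v(x)\,\dd x\,\dd t$; hence the substantive hypothesis of \cite[Theorem~III.5.34]{JS2003} reduces, in each direction, to the finiteness of the Hellinger-type integral $H_T$ of \eqref{eq:h}.

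The only genuine computation is that $H_T<\infty$, $\pp^\sigma_T$-a.s. A Frullani integral gives, for every $t\in[0,T]$,
\[
h_t=\int_0^\infty\Bigl(\exp\Bigl(-\half\frac{\beta x}{\sigma(X_{t-})}\Bigr)-\exp\Bigl(-\half\beta x\Bigr)\Bigr)^2\frac{\dd x}{x}=\log\frac{(1+\sigma(X_{t-}))^2}{4\,\sigma(X_{t-})},
\]
and $s\mapsto\log\bigl((1+s)^2/(4s)\bigr)$ is continuous on $(0,\infty)$. Since $X$ is non-decreasing with $X_0=0$ and $X_T<\infty$ $\pp^\sigma_T$-a.s., along $\pp^\sigma_T$-almost every path $\sigma(X_{t-})$ stays in the compact interval $[\sigma_0,\sup_{[0,X_T]}\sigma]$ for all $t\le T$, the lower bound by hypothesis and the upper bound by local boundedness of $\sigma$; therefore $\sup_{t\le T}h_t<\infty$ and $H_T=\alpha\int_0^T h_t\,\dd t<\infty$, $\pp^\sigma_T$-a.s.\ (this is exactly where $\sigma\ge\sigma_0>0$ enters: without it $h_t$ may be non-integrable near $t=0$). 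Then \cite[Theorem~III.5.34]{JS2003} gives $\pp^\sigma_T\ll\pp^1_T$, the density process being the Dol\'eans exponential displayed on the right of \eqref{eq:lr}; writing $N$ for the compensated stochastic integral inside that Dol\'eans exponential, this is \eqref{eq:lr}, equivalently $Z_T=\dd\pp^\sigma_T/\dd\pp^1_T$ solves \eqref{eq:sdez}.

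For the reverse inclusion I would use the symmetry of the Hellinger process: the integral controlling $\pp^1_T\ll\pp^\sigma_T$, namely $\int_0^T\int_0^\infty(\sqrt{1/Y(t,x)}-1)^2 v^\sigma(t,x)\,\dd x\,\dd t$, reduces after inserting $v^\sigma(t,x)=\alpha x^{-1}e^{-\beta x/\sigma(X_{t-})}$ and $1/Y(t,x)=e^{\beta x(1/\sigma(X_{t-})-1)}$ to the same quantity $\alpha\int_0^T h_t\,\dd t$, now to be finite $\pp^1_T$-a.s.; since under $\pp^1_T$ the coordinate process is $L$ with $L_T<\infty$ a.s., the estimate of the previous paragraph applies verbatim, so \cite[Theorem~III.5.34]{JS2003} gives $\pp^1_T\ll\pp^\sigma_T$. (Alternatively, equivalence follows from $\pp^\sigma_T\ll\pp^1_T$ alone on noting that the jumps $\Delta N_s=Y(s,\Delta X_s)-1$ exceed $-1$ because $Y>0$, while $\sum_{s\le T}(\Delta X_s)^2\le(\sup_{s\le T}\Delta X_s)\,X_T<\infty$, so that $Z=\mathcal E(N)$ is strictly positive and $\pp^1_T(Z_T=0)=0$.) Combining the two inclusions yields $\pp^\sigma_T\sim\pp^1_T$.

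I expect the main obstacle to be not the Hellinger bound — which, as the closed form for $h_t$ makes plain, is elementary once the explicit gamma density \eqref{eq:v} is inserted — but the careful verification of the structural hypotheses of \cite[Theorem~III.5.34]{JS2003} on the canonical space: that the triplet of characteristics of the coordinate process under $\pp^\sigma_T$ is obtained from that under $\pp^1_T$ by the prescribed predictable transformation (Gaussian part unchanged, jump compensator multiplied by $Y$, first characteristic corrected by $\int_0^{\bcdot}\int_{(0,1]}x\,(Y(s,x)-1)\,v(x)\,\dd x\,\dd s$, which is finite since $x|Y(s,x)-1|=O(x^2)$ as $x\downarrow0$), and that $H_T<\infty$ indeed places $Y-1$ in $G_{\mathrm{loc}}(\mu^X)$, so that $N$ and hence $\mathcal E(N)$ are well defined and the latter is a genuine martingale on $[0,T]$ with $\mathcal E_0(N)=1$.
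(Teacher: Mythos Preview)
Your argument is correct and follows the same overall strategy as the paper: verify the Hellinger condition $H_T<\infty$ and invoke \cite[Theorem~III.5.34]{JS2003} for absolute continuity, then read off the density as a Dol\'eans exponential. The one genuine tactical difference is in how you control $h_t$. The paper does not evaluate the inner integral; it splits it at $x=1$, uses $(e^{-ax}-e^{-bx})^2\le(b-a)^2x^2$ on $[0,1]$ and $(a-b)^2\le 2(a^2+b^2)$ on $(1,\infty)$, and then bounds the resulting pieces via $\sigma\ge\sigma_0$ and local boundedness of $\sigma$ on $[0,X_T]$. Your Frullani computation $h_t=\log\bigl((1+\sigma(X_{t-}))^2/(4\sigma(X_{t-}))\bigr)$ is cleaner and gives the same conclusion in one line; the paper's splitting, by contrast, requires nothing beyond elementary inequalities and would generalise more readily to other L\'evy densities where no closed form is available.

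Two minor points. First, the identification of the density process as the Dol\'eans exponential is obtained in the paper from \cite[Theorem~III.5.19]{JS2003}, not III.5.34; the latter supplies absolute continuity from $H_T<\infty$, the former the explicit Girsanov formula. Second, your closing worry that $\mathcal E(N)$ be a \emph{genuine} martingale (i.e.\ $\ee_{\pp^1_T}Z_T=1$) is not needed for this proposition: here one assumes a weak solution already exists and merely identifies the density between two given laws, for which the local-martingale Girsanov theorem suffices. The uniform integrability issue is exactly what the paper addresses later, in the proof of Theorem~\ref{thm:weak}, where the measure change is used constructively and one must show $\ee_\qq Z_T=1$ by the L\'epingle--M\'emin criterion on short subintervals. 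Your treatment of the reverse inclusion $\pp^1_T\ll\pp^\sigma_T$ is in fact more explicit than the paper's, which does not spell it out.
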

\begin{proof}
Split the integrand $h_t$ in \eqref{eq:h} into two integrals, for $x\in [0,1]$ and $x\in (1,\infty)$, call them $h_t^<$ and $h_t^>$ respectively. For $h_t^<$ we use the elementary inequality $(e^{-ax}-e^{-bx})^2\leq (b-a)^2x^2$ for $a,b,x\geq 0$ to obtain the bound 
$h_t^<\leq\frac{\beta^2}{4}(\frac{1}{\sigma(X_{t-})}-1)^2$  (here we also used $x\leq 1$), which is bounded by the finite constant $\frac{\beta^2}{2}(\frac{1}{\sigma_0^2}+1)$. To treat the integral $h_t^>$ we use the elementary inequality $(a-b)^2\leq 2(a^2+b^2)$, which leads us to study, also using $x\geq 1$,
\[
\int_1^\infty \left(\exp\Bigl(-\beta\frac{x}{\sigma(X_{t-})}\Bigr)+\exp(-\beta x)\right)\,\dd x = \exp\Bigl(-\frac{\beta}{\sigma(X_{t-})}\Bigr)\frac{\sigma(X_{t-})}{\beta}+\frac{1}{\beta}\exp(-\beta).
\]
Here the first term on the right-hand side is bounded by $\sigma(X_{t-})/\beta$.  As $X$ is increasing, $X_{t-}$ is between zero and $X_{T}$, which is finite $\pp^\sigma_T$-a.s. By the local boundedness of $\sigma$, also  $\sup_{t\leq T}\sigma(X_{t-})\leq\sup_{x\leq X_T}\sigma(x)$ is finite $\pp^\sigma_T$-a.s. From the obtained bounds on $h_t^<$ and $h_t^>$ it follows that $H_T$ is a.s.\ bounded under $\pp^\sigma_T$, so the condition \eqref{eq:HT} is satisfied. The expression for the likelihood ratio  as a Dol\'eans exponential in \eqref{eq:lr} follows from Theorem~III.5.19 in \cite{JS2003}.
\end{proof}
The next proposition gives an explicit expression for the Radon-Nikodym derivative in Proposition~\ref{prop:girsanov}. This is useful when computations with this Radon-Nikodym derivative have to be done, for instance for likelihood based inference in a statistical analysis.

\begin{prop}\label{prop:z}
Let the conditions of Proposition~\ref{prop:girsanov} hold. Then the solution $Z$ to \eqref{eq:sdez} has at any time $T>0$ 
the explicit representation 
\begin{equation}\label{eq:explicit}
Z_T=\exp\left(\int_{0}^{T}\int_{0}^{\infty}\log Y(t,x)\,\mu^X(\dd x,\dd t)-  \int_0^T\int_{0}^{\infty}(Y(t,x)-1)v(x)\,\dd x\,\dd t  \right),
\end{equation}
where both double integrals are a.s.\ finite. 
\end{prop}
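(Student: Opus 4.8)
The plan is to read off \eqref{eq:explicit} from the general formula for the Dol\'eans--Dade exponential applied to the purely discontinuous local martingale
\[
U_t:=\int_0^t\int_{(0,\infty)}(Y(s,x)-1)\bigl(\mu^X(\dd x,\dd s)-v(x)\,\dd x\,\dd s\bigr),
\]
for which $Z=\mathcal{E}(U)$ by Proposition~\ref{prop:girsanov}. Since $X$ is driven by the gamma subordinator $L$ it has trivial continuous martingale part, and so does $U$; moreover the compensator $\int_0^t\int(Y(s,x)-1)v(x)\,\dd x\,\dd s$ is absolutely continuous in $t$, so the only jumps of $U$ come from $\mu^X$, namely $\Delta U_s=(Y(s,\Delta X_s)-1)\one\{\Delta X_s>0\}$. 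The Dol\'eans formula (see \cite{JS2003}) then yields
\[
Z_T=\mathcal{E}_T(U)=\exp(U_T)\prod_{s\le T}(1+\Delta U_s)\,e^{-\Delta U_s},
\]
where $1+\Delta U_s=Y(s,\Delta X_s)=\exp\!\bigl(-\beta\Delta X_s(1/\sigma(X_{s-})-1)\bigr)>0$; in particular $Z_T>0$ and $\log(1+\Delta U_s)$ is well defined.

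Next I would rearrange. Writing $U_T=\int_0^T\int(Y-1)\,\mu^X-\int_0^T\int(Y-1)v\,\dd x\,\dd t$ and using $\int_0^T\int(Y-1)\,\mu^X=\sum_{s\le T}\Delta U_s$, the term $\exp(U_T)$ cancels the factor $\exp(-\sum_{s\le T}\Delta U_s)$ from the product, leaving
\[
Z_T=\exp\!\Bigl(\sum_{s\le T}\log(1+\Delta U_s)-\int_0^T\!\int_{(0,\infty)}(Y(t,x)-1)v(x)\,\dd x\,\dd t\Bigr).
\]
Since $\log(1+\Delta U_s)=\log Y(s,\Delta X_s)$ and $\sum_{s\le T}\log Y(s,\Delta X_s)=\int_0^T\int_{(0,\infty)}\log Y(t,x)\,\mu^X(\dd x,\dd t)$, this is precisely \eqref{eq:explicit}. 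To make the cancellation and the use of the product form legitimate one needs $\sum_{s\le T}|\Delta U_s|<\infty$ a.s.; this follows from the bounds derived in the last paragraph together with $\sum_{s\le T}\Delta X_s=X_T<\infty$ a.s.\ (recall $X$ is pure jump) and the elementary estimate $|e^{y}-1|\le|y|e^{|y|}$.

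Finally, I would check that both double integrals in \eqref{eq:explicit} are a.s.\ finite. For the $v\,\dd x\,\dd t$ integral, $Y(t,x)=\exp(-\beta x(1/\sigma(X_{t-})-1))$ together with the Frullani identity $\int_0^\infty x^{-1}(e^{-ax}-e^{-bx})\,\dd x=\log(b/a)$, valid for $a,b>0$, gives $\int_0^\infty(Y(t,x)-1)v(x)\,\dd x=\alpha\log\sigma(X_{t-})$, so the integral equals $\alpha\int_0^T\log\sigma(X_{t-})\,\dd t$; this is finite a.s.\ because $\sigma_0\le\sigma(X_{t-})\le\sup_{x\le X_T}\sigma(x)<\infty$ a.s., by the lower bound $\sigma_0>0$, the local boundedness of $\sigma$, and the a.s.\ finiteness of $X_T$. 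For the $\mu^X$ integral, $\int_0^T\int_0^\infty\log Y(t,x)\,\mu^X(\dd x,\dd t)=-\beta\sum_{s\le T}(1/\sigma(X_{s-})-1)\Delta X_s$, whose absolute value is at most $\beta(1+1/\sigma_0)\sum_{s\le T}\Delta X_s=\beta(1+1/\sigma_0)X_T<\infty$ a.s.

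The \textbf{main obstacle} is essentially bookkeeping: carefully separating the jumping part of $U$ (the $\mu^X$-integral, a genuine sum over jump times) from its absolutely continuous compensator, and justifying the passage to the exponential-of-a-sum form of the Dol\'eans exponential through the a.s.\ absolute convergence of $\prod_{s\le T}(1+\Delta U_s)e^{-\Delta U_s}$. Once the a.s.\ bounds involving $X_T$, $\sigma_0$, and the local boundedness of $\sigma$ are in hand, the remainder is routine.
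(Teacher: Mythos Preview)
Your argument is correct. Both proofs ultimately rest on the same fact---that $U$ (equivalently its two constituent pieces) has finite variation on $[0,T]$---but you reach the formula and verify the finiteness differently from the paper.

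The paper invokes Lemma~18.8 in~\cite{LS1978} as a black box: once $F=U$ is shown to have finite variation, that lemma yields \eqref{eq:explicit} directly. Finite variation is then checked by bounding $\int_0^\infty|Y(t,x)-1|v(x)\,\dd x$ piecewise, splitting at $x=1$ and using the estimates $|e^{-ax}-e^{-bx}|\le|a-b|x$ near the origin and integrability of $e^{-\beta x/\sigma}$ at infinity. You instead unfold the Dol\'eans--Dade exponential by hand and justify the rearrangement via $\sum_{s\le T}|\Delta U_s|<\infty$, which is essentially a reproof of that lemma specialised to the present setting; your finiteness check for the compensator integral is sharper, since the Frullani identity gives the exact value $\int_0^\infty(Y(t,x)-1)v(x)\,\dd x=\alpha\log\sigma(X_{t-})$ (note the integrand has constant sign in $x$, so this also controls the absolute integral). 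What you gain is a self-contained derivation that avoids the external reference and exposes the exact form of the compensator; what the paper gains is brevity. Two minor points of presentation: your forward reference to ``the bounds derived in the last paragraph'' would read more smoothly if the inequality $|1/\sigma(X_{s-})-1|\le 1+1/\sigma_0$ were stated before it is used, and it is worth saying explicitly that under $\pp^1$ the process $X=L$ is a driftless subordinator, so $\sum_{s\le T}\Delta X_s=X_T$.
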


\begin{proof}
It follows from Lemma~18.8 in~\cite{LS1978}, under the condition that the process $F$ defined by $F =\int_0^{\bcdot}\int_{(0,\infty)}(Y(t,x)-1)(\mu^X(\dd x,\dd t)-v(x)\,\dd x\,\dd t)$ is a process of finite variation, that the explicit expression in \eqref{eq:explicit} holds. We proceed by  
showing that $F$ has a.s.\ finite variation over any interval $[0,T]$. 
Note first that the variation of $F$ over $[0,T]$ is $\|F\|_T:=\int_{(0,T]}\int_{(0,\infty)}|Y(t,x)-1|(\mu^X(\dd x,\dd t)+v(x)\,\dd x\,\dd t)$. In view of Proposition~II.1.28 in~\cite{JS2003}, it is sufficient to check that $\int_{(0,T]}\int_{(0,\infty)}|Y(t,x)-1|v(x)\,\dd x\,\dd t$ is finite. We consider the inner integral, split into two integrals, one for $x\geq 1$, one for $0<x<1$. Consider first 
\begin{align*}
\int_{x\geq 1}|Y(t,x)-1| v(x)\,\dd x & = \int_{1}^\infty\Bigl|\exp(-\beta x(\frac{1}{\sigma(X_{t-})}-1))-1\Bigr| \frac{\alpha}{x}e^{-\beta x}\,\dd x \\
& \leq \alpha\int_1^\infty \left|\exp\Bigl(-\frac{\beta x}{\sigma(X_{t-})}\Bigr)-\exp(-\beta x)\right| \,\dd x \\
& = \frac{\alpha |\sigma(X_{t-})-1|}{\beta}\exp(-\beta).
\end{align*}
We find that $\int_{(0,T]}\int_{x\geq 1}|Y(t,x)-1|v(x)\,\dd x\,\dd t$ is finite a.s., as $\sigma$ is assumed to be a locally bounded function. For the other inner integral we use the elementary inequality for $p,q>0$
\[
\int_0^1 \frac{|e^{-px}-e^{-qx}|}{x}\,\dd x\leq |p-q|.
\]
Then we have, using that $\sigma$ is lower bounded by $\sigma_0$, 
\begin{align*}
\int_{0<x< 1}|Y(t,x)-1| v(x)\,\dd x & = \int_0^1\Bigl|\exp(-\beta x(\frac{1}{\sigma(X_{t-})}-1))-1\Bigr| \frac{\alpha}{x}e^{-\beta x}\,\dd x \\
& = \int_{0}^1\Bigl|\exp\Bigl(-\frac{\beta x}{\sigma(X_{t-})}\Bigr)-\exp(-\beta x)\Bigr| \frac{\alpha}{x}\,\dd x \\
& \leq \alpha\beta \Bigl|\frac{1}{\sigma(X_{t-})}-1\Bigr| \leq \alpha\beta \left(\frac{1}{\sigma_0}+1\right).
\end{align*}
Hence also $\int_{(0,T]}\int_{0<x< 1}|Y(t,x)-1|v(x)\,\dd x\,\dd t$ is finite.
\end{proof}

%\begin{rem}\label{remark:explicit}
%The results of Proposition~\ref{prop:girsanov} and \ref{prop:z} are also valid when $T$ is a finite stopping time w.r.t.\ filtration $\mathbb{F}^X$. 
%\end{rem}

\section{Weak solutions}

We will use a variation of Proposition~\ref{prop:girsanov} to establish existence of a weak solution to $\eqref{eq:sde}$ under a growth condition on $\sigma$. The precise result follows.
\begin{thm}\label{thm:weak}
Assume that $\sigma\colon [0,\infty)\to [0,\infty)$ is measurable, lower bounded by a constant $\sigma_0>0$, and satisfies a linear growth condition, i.e.\ there exists $K>0$ such that for all $x\geq 0$ it holds that $\sigma(x)\leq K(1+x)$. Then,  on the interval $[0,\infty)$, Equation~\eqref{eq:sde} admits a weak solution that is unique in law.
\end{thm}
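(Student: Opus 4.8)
The plan is to construct the law of a weak solution on the canonical path space by an absolutely continuous change of measure from the reference law $\pp^1_T$ — under which the coordinate process $X$ equals the gamma process $L$ — thereby reversing the direction of Proposition~\ref{prop:girsanov}. Since the relevant density process is in general only a local martingale when $\sigma$ is unbounded, I would first prove the theorem for a $\sigma$ that is also bounded above, and then remove that restriction by localisation. For the latter, given a general $\sigma$ as in the statement and $N\in\mathbb{N}$, set $\sigma_N(x)=\sigma(x)$ for $x\le N$ and $\sigma_N(x)=\sigma(N)$ for $x>N$; then $\sigma_N$ is measurable, bounded below by $\sigma_0$, bounded above by $\sup_{x\le N}\sigma(x)\le K(1+N)$, and coincides with $\sigma$ on $[0,N]$. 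A weak solution of the $\sigma_N$-equation, stopped at $\tau_N:=\inf\{t\colon X_t\ge N\}$, solves the $\sigma$-equation on $[0,\tau_N)$; by uniqueness in law in the bounded case these stopped laws are consistent in $N$ and patch into a weak solution on $[0,\tau_\infty)$, and $\tau_\infty=\infty$ a.s.\ because the linear growth gives $X_t\le K\int_0^t(1+X_{s-})\,\dd L_s$, hence by a Gronwall-type estimate for finite-variation pure-jump integral equations $1+X_t\le\mathcal{E}_t(KL)\le e^{KL_t}<\infty$ a.s.; uniqueness in law for $\sigma$ then follows from uniqueness for the $\sigma_N$ by the same localisation.

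For bounded $\sigma$, say $\sigma_0\le\sigma\le\sigma_1<\infty$, I would work on the canonical filtered space $(D,\mathcal{D},\mathbb{F}^X,\pp^1)$, so that $X=L$, put $Y(t,x)=\exp(-\beta x(1/\sigma(X_{t-})-1))$, and \emph{define} the candidate density
\[
Z_t=\exp\Bigl(-\beta\int_0^t\Bigl(\tfrac1{\sigma(X_{s-})}-1\Bigr)\dd X_s-\alpha\int_0^t\log\sigma(X_{s-})\,\dd s\Bigr).
\]
By the Frullani identity $\int_0^\infty(e^{-ax}-e^{-bx})x^{-1}\,\dd x=\log(b/a)$, $Z_t$ equals the right-hand side of \eqref{eq:explicit} with $\mu^X=\mu^L$, and therefore, by the reasoning in the proof of Proposition~\ref{prop:z} (which only needs $F=\int_0^{\bcdot}\int_{(0,\infty)}(Y-1)(\mu^X-v\,\dd x\,\dd t)$ to be of finite variation, established there from the lower and local boundedness of $\sigma$ and not requiring a weak solution), $Z$ solves \eqref{eq:sdez} and is a nonnegative $\pp^1$-local martingale with $Z_0=1$. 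The key step is that $Z$ is a \emph{true} martingale on each $[0,T]$: using $1-1/\sigma(X_{s-})\le 1-1/\sigma_1$, $\dd X_s\ge0$, $X_t\le X_T$ and $-\log\sigma(X_{s-})\le|\log\sigma_0|$, one obtains $Z_t\le e^{\alpha T|\log\sigma_0|}\exp\bigl((\beta-\beta\sigma_1^{-1})^+X_T\bigr)$ for all $t\le T$; since $X_T=L_T\sim\operatorname{Gamma}(\alpha T,\beta)$ has finite exponential moments of every order strictly below $\beta$ and $(\beta-\beta\sigma_1^{-1})^+<\beta$, the dominating random variable is in $L^1(\pp^1)$. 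A nonnegative local martingale dominated on $[0,T]$ by a fixed integrable random variable is a uniformly integrable martingale, hence $\ee^{\pp^1}[Z_T]=1$, and the measures $\pp^\sigma_T:=Z_T\cdot\pp^1_T$ are consistent in $T$ and define a probability $\pp^\sigma$ on $\mathcal{D}$.

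Under $\pp^\sigma$, Girsanov's theorem for random measures (see \cite{JS2003}) turns the $\mathbb{F}^X$-compensator of $\mu^X$ into $Y(t,x)v(x)\,\dd x\,\dd t=v^\sigma(t,x)\,\dd x\,\dd t$, while $X$ stays increasing, of finite variation, pure jump and started at $0$ (as $\pp^\sigma\sim\pp^1$ on each $\cf^X_t$). Then $L_t:=\int_{(0,t]}\int_{(0,\infty)}\tfrac{x}{\sigma(X_{s-})}\,\mu^X(\dd x,\dd s)$ is increasing and pure jump, its jump measure is the image of $\mu^X$ under $(s,x)\mapsto(s,x/\sigma(X_{s-}))$, and the change of variables of Lemma~\ref{lemma:vsigma} makes its $\mathbb{F}^X$-compensator the \emph{deterministic} measure $v(y)\,\dd y\,\dd s$; hence $L$ is a gamma process with L\'evy density $v$, and $\Delta X_s=\sigma(X_{s-})\Delta L_s$ together with $X$, $L$ being increasing and pure jump forces $X=\int_0^{\bcdot}\sigma(X_{s-})\,\dd L_s$, so $(X,L)$ on $(D,\mathcal{D},\mathbb{F}^X,\pp^\sigma)$ is a weak solution of \eqref{eq:sde}. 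Uniqueness in law in the bounded case is then immediate: any weak solution $X'$ has $H_T<\infty$ a.s.\ under its own law (the bound in the proof of Proposition~\ref{prop:girsanov} uses only boundedness of $\sigma$), so by \cite[Theorem~III.5.19]{JS2003} its law on $\cf^X_T$ is $\mathcal{E}_T(\cdots)\cdot\pp^1_T=Z_T\cdot\pp^1_T$ with the \emph{same} path functional $Z_T$; hence it equals $\pp^\sigma$. Combined with the reduction step, this proves the theorem.

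I expect the main obstacle to be the genuine — as opposed to merely local — martingale property of $Z$, equivalently $\pp^\sigma_T\ll\pp^1_T$: this does not follow from the a.s.\ finiteness of $H_T$ alone and can in fact fail for sufficiently fast-growing $\sigma$, which is precisely why the two-sided bound on $\sigma$, and hence the preliminary truncation, is needed. The domination estimate closes thanks to the finite exponential moments $\ee^{\pp^1}[e^{cL_T}]<\infty$ for $c<\beta$ and the elementary bound $1-1/\sigma(X_{t-})<1$; the remaining ingredients (Frullani, Girsanov for random measures, the reconstruction of $L$, non-explosion, and the patching) are routine once this point is in hand.
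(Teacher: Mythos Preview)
Your proof is correct and follows a genuinely different strategy from the paper's own argument. The paper works directly with the general (linearly growing) $\sigma$: it verifies the L\'epingle--M\'emin criterion after computing $\int_0^\infty(Y\log Y-Y+1)v\,\dd z=\alpha(\sigma(X_{t-})-1-\log\sigma(X_{t-}))$, and because the resulting exponential moment $\ee_\qq\exp(\alpha K T X_T)$ need not be finite, it \emph{time-slices} $[0,T]$ into subintervals of length $\delta T$ with $\delta<\beta/(\alpha KT)$ so that each factor $Z^n$ is a martingale, and then multiplies; uniqueness is obtained by a reverse change of measure with a second time-slicing and an appeal to a moment bound of Klebaner--Liptser. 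You instead \emph{truncate in space}: for bounded $\sigma_0\le\sigma\le\sigma_1$ you dominate $Z_t$ on $[0,T]$ by $e^{\alpha T|\log\sigma_0|}\exp\bigl((\beta-\beta/\sigma_1)^+L_T\bigr)$, which is integrable because the exponent is strictly below $\beta$; this is more elementary than L\'epingle--M\'emin and dispenses with the external moment estimate. The cost is the localisation/patching step for weak solutions (consistency of the stopped laws $\pp^{\sigma_N}|_{\cf_{\tau_N}}$ and non-explosion via the Gronwall bound $1+X_t\le\mathcal{E}_t(KL)$), which is standard but not entirely free: note that ``uniqueness in law for $\sigma_N$'' alone does not immediately yield equality of the stopped laws---you implicitly use either the explicit density representation on $\{X_T<N\}$ (where $Z_T^{\sigma_N}=Z_T^{\sigma_M}$ as path functionals) or the usual extension-beyond-$\tau_N$ argument. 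Both routes arrive at the same conclusion; yours is arguably cleaner for the martingale step, the paper's avoids the patching machinery.
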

\begin{proof}
This proof is inspired by Section~5.3B of \cite{karatzas91} for a similar problem in a Brownian setting. Fix $T>0$ and consider a probability space $(\Omega,\cf,\qq)$ on which $X$ is defined as the gamma process. We choose $\Omega$ to be the Skorohod space, $\cf=\cf^X=\sigma(X_t,t\geq 0)$, and $X$ the coordinate process. Furthermore we use the filtration $\mathbb{F}^X=\{\cf^X_t, t\geq 0\}$. The restriction of $\qq$ to $\cf^X_T$ is denoted $\qq_T$. As a semimartingale, under $\qq$, $X$ has third characteristic $\nu^{X,\qq}(\dd x,\dd t)=v(x)\dd x\dd t$ with $v$ as in \eqref{eq:v}. Define $L$ by $\dd L_t=\frac{1}{\sigma(X_{t-})}\,\dd X_t$ and $L_0=0$. Since $\sigma$ is bounded from below and measurable, the process $L$ is well-defined.
We again take 
\[
Y(t,z)=\frac{1}{\sigma(X_{t-})}\frac{v(\frac{z}{\sigma(X_{t-})})}{v(z)},
\]
and make a measure change on $\cf^X_T$, parallel to Proposition~\ref{prop:girsanov},
\[
\frac{\dd \pp_T}{\dd\qq_T}=Z_T:=\mathcal{E}_T\left(\int_0^{\bcdot}\int_{(0,\infty)}(Y(t,z)-1)(\mu^X(\dd z,\dd t)-\nu^{X,\qq}(\dd z,\dd t))\right).
\]
Provided that $\pp_T$ is a probability measure on $\cf^X_T$, which happens if $\ee_\qq Z_T=1$,
the third characteristic of $X$ under $\pp_T$ is, similar to Lemma~\ref{lemma:vsigma},
\[
\nu^{X,\pp}(\dd z,\dd t)=Y(t,z)\nu^{X,\qq}(\dd z,\dd t)=v^{X,\pp}(t,z)\dd z\dd t,
\]
where $v^{X,\pp}(t,z)=\frac{1}{\sigma(X_{t-})}v(\frac{z}{\sigma(X_{t-})})$, so $v^{X,\pp}(t,z)=v^\sigma$ with $v^\sigma$ as in \eqref{eq:vt}.
By the arguments in the proof of Lemma~\ref{lemma:vsigma}, one obtains that under $\pp_T$ the process $L$ has third characteristic $\nu^{L,\pp}(\dd z,\dd t)=v^{L,\pp}(t,z)\dd z\dd t$,
with $v^{L,\pp}(t,z)=v^{X,\pp}(t,z\sigma(X_{t-}))\sigma(X_{t-})$, which is nothing else but $v(z)$, implying that under $\pp_T$, $L$ is a gamma process on $[0,T]$, and it also holds that $\dd X=\sigma(X_{t-})\,\dd L_t$. We conclude that under $\pp_T$, $X$ is
a solution of the SDE, where $L$ is a gamma process with L\'evy density $v$. 
What remains to be shown for existence of a weak solution is that $\pp_T$ is a probability measure on $\cf^X_T$. 
We use Theorem~IV.3 of \cite{LepingleMemin}, 
this will be done via a detour as a direct application doesn't give the desired results.
First we compute $\int_0^\infty (y(\sigma(x),z)\log y(\sigma(x),z)-y(\sigma(x),z)+1)v(z) \,\dd z$, where $y(\sigma,z)=\frac{1}{\sigma}v(\frac{z}{\sigma})/v(z)=\exp(-\beta z/\sigma +\beta z)$. 

Consider
$f(\sigma) 
=\int_0^\infty (y(\sigma,z)\log y(\sigma,z)-y(\sigma,z)+1)v(z) \,\dd z$. Some tedious calculations show that
$f(\sigma)=\alpha(\sigma-1-\log \sigma)$. 
Hence
\[
\int_{(0,\infty)}(Y(t,z)\log Y(t,z)-Y(t,z)+1)v(z)\dd z=\alpha(\sigma(X_{t-})-1-\log \sigma(X_{t-})).
\]
Consider for some $\delta>0$, to be chosen later, $T_n=n\delta T$, for $n=0,\ldots N$, with $T_N=T$, so $\delta=1/N$; note that $T_n-T_{n-1}=\delta T$. Let $Z^n$ be the solution to 
\begin{equation}\label{eq:zn}
\dd Z^n_t=Z^n_{t-}\left(\int_{(0,\infty)}\one_{(T_{n-1},T_n]}(t)(Y(t,x)-1)(\mu^X(\dd x,\dd t)-\nu^{X,\qq}(\dd x,\dd t))\right), 
\end{equation}
with $Z^n_0=1$, for $n=1,\ldots,N$. If the $Z^n$ are martingales, not just local martingales, under $\qq$ w.r.t.\ the filtration $\mathbb{F}^X$, then $\ee_\qq[Z^n_{T_n}|\cf^X_{T_{n-1}}]=Z^n_{T_{n-1}}=1$. Since $Z_T=\prod_{n=1}^N Z^n_{T_n}$, one obtains
\[
\ee_\qq Z_T=\ee_\qq\prod_{n=1}^{N-1}Z^n_{T_n}\ee_\qq[Z^N_{T_N}|\cf_{T_{N-1}}]=\ee_\qq\prod_{n=1}^{N-1}Z^n_{T_n}=\ee_\qq Z_{T_{N-1}},
\]
which can be seen equal to one by an induction argument. To see that the $Z^n$ are martingales, we use Theorem~IV.3 of \cite{LepingleMemin}, i.e.\ the aim is to show
\[
\ee_\qq\exp\left(\int_0^T\int_{[0,\infty)}\one_{(T_{n-1},T_n]}(t)(Y(t,z)\log Y(t,z)-Y(t,z)+1)v(z)\dd z\dd t\right)<\infty.
\]
In view of the computations above this amounts to showing that 
\[
\ee_\qq\exp\left(\int_{T_{n-1}}^{T_n}\alpha(\sigma(X_{t-})-1-\log \sigma(X_{t-}))\dd t\right)<\infty.
\]
Using that $\sigma$ is lower bounded and satisfies the growth condition, and that $X$ is an increasing process under $\qq$, we have that the integrand above is upper bounded by $\alpha(K(1+X_t)-1-\log\sigma_0\leq C+\alpha KX_T$, for some $C>0$. Hence, $\int_{T_{n-1}}^{T_n}\alpha(\sigma(X_{t-})-1-\log \sigma(X_{t-}))\dd t\leq\delta\alpha K T X_T+C'$, where $C'$ is another positive constant, so it is sufficient to prove that $\ee_\qq\exp(\delta\alpha K T X_T)<\infty$. But, under $\qq$, $X_T$ has a $\operatorname{Gamma}(\alpha T,\beta)$ distribution, so the latter expectation, calculated as an integral w.r.t.\ to the gamma distribution,  is seen to be finite if $\delta<\beta/(\alpha K T)$, equivalently $N>\alpha K T/\beta$. For such a choice of $N$, we obtain that the $Z^n$ are  martingales and hence $\ee_\qq Z_T=1$. As a consequence $\pp_T$ is a probability measure on $(\Omega,\cf^X_T)$ for every $T>0$. In fact the $\pp_T$ form a consistent family of probability measures, and hence there exists a probability measure $\pp$ on $\cf$ such that $\pp|_{\cf^X_T}=\pp_T$, see Lemma~18.18 in \cite{Kallenberg}. This shows that there exists a weak solution on the entire time interval $[0,\infty)$. Finally, we turn to uniqueness in law of a weak solution. Consider two possible weak solutions $X^i$, or rather $(X^i,L^i)$ for $i=1,2$, on an interval $[0,T]$, defined on their own filtered probability spaces $(\Omega^i,\cf^i,\mathbb{F}^i,\pp^i)$. Consider changes of measures $\dd\tilde\pp^i=Z^i_T\dd\pp^i$. Here we take $T=t$ in
\[
Z^i_t=\mathcal{E}_t\left(\int_0^{\bcdot}\int_{(0,\infty)}(\tilde Y^i(s,z)-1)(\mu^{X^i}(\dd z,\dd s)-\nu^{X^i,\pp^i}(\dd z,\dd s))\right),
\]
where 
\[
\tilde Y^i(s,z)=\frac{\sigma(X^i_{s-})v(z)}{v(z/\sigma(X^i_{s-}))}=\tilde y(\sigma(X^i_{s-}),z),
\]
with $\tilde y(\sigma,z)=\frac{1}{y(\sigma,z)}=\exp(\beta z/\sigma-\beta z)$. We assume for a while that the $Z^i_T$ have expectations one under $\pp^i$ so that the $\tilde\pp^i$ are probability measures on $\cf^i_T$, equivalent to the $\pp^i$. By the arguments used earlier in this proof, under $\tilde\pp^i$ the processes $X^i$ are gamma processes with parameters $\alpha,\beta$. Hence the distributions, for $i=1,2$, of the $X^i$ are identical under the probability measures $\tilde\pp^i$. Consider then samples $X^i(n)=(X^i_{t_1},\ldots X^i_{t_n})$, $0\leq t_1\leq,\ldots,\leq t_n\leq T$, and Borel sets $B$ of $\rr^n$. Then
\begin{equation}\label{eq:ppi}
\pp^i(X^i(n)\in B)=\ee_{\tilde\pp^i}\frac{1}{Z^i_T}\one_{\{X^i(n)\in B\}}.
\end{equation}
On the right hand side of Equation~\eqref{eq:ppi}, all random quantities are defined in terms of $X^i$, hence the expectations in \eqref{eq:ppi} are the same for $i=1,2$. This shows that the finite dimensional distributions of $X^1$ and $X^2$ are identical and hence the laws of $X^1$ and $X^2$ are  the same as well. It is left to show that the $Z^i_T$ have expectations one under $\pp^i$. We follow the same path as above, we first compute $\tilde{F}(x):=\int (\tilde y(\sigma(x),z)\log \tilde y(\sigma(x),z)-\tilde y(\sigma(x),z)+1)\frac{v(z/\sigma)}{\sigma} \,\dd z$. Thereto we consider $\tilde f(\sigma) 
=\int (\tilde y(\sigma,z)\log \tilde y(\sigma,z)-\tilde y(\sigma,z)+1)\frac{v(z/\sigma)}{\sigma} \,\dd z$. It turns out that $\tilde f(\sigma)=\alpha(\frac{1}{\sigma}-1+\log \sigma)$, so $\tilde f(\sigma)=f(\frac{1}{\sigma})$, and hence $\tilde F(x)=\alpha(\frac{1}{\sigma(x)}-1+\log \sigma(x))$. From here we continue to solve the SDE for $Z^i$ on intervals $(T_{n-1},T_n]$, similar to \eqref{eq:zn}, with $T_n=n\delta T$, resulting in  processes $Z^{i,n}$ that are martingales with $Z^{i,n}_{T_{n-1}}=1$. To show the martingale property for $Z^{i,n}$, we use again Theorem~IV.3 of \cite{LepingleMemin}, i.e.\ we show
\[
\ee_{\pp^i}\exp\left(\int_{T_{n-1}}^{T_n}\alpha\Bigl(\frac{1}{\sigma(X^i_{t-})}-1+\log \sigma(X^i_{t-})\Bigr)\dd t\right)<\infty.
\]
Here the integrand is bounded by $\alpha(\frac{1}{\sigma_0}-1 +\log K +\log(1+X^i_T))$. Hence for a constant $C$, depending on $T$, the exponent is less than or equal to $C(1+X^i_T)^{\delta\alpha T}$, and we have to show for a well chosen $\delta>0$ that $\ee_{\pp^i}(1+X^i_T)^{\delta\alpha T}<\infty$, equivalently $\ee_{\pp^i}(X^i_T)^{\delta\alpha T}<\infty$. The conditions in Proposition~4.1 of \cite{KlebanerLiptser} (on their operator $L_s(x_{s-})$) are satisfied by the linear growth condition on $\sigma$, and as a result one has $\ee_{\pp^i}(X^i_T)^{2}<\infty$. Therefore we take $\delta<2/\alpha T$.
\end{proof}

%\begin{rem}
%We will be interested in solutions to \eqref{eq:sde} in the case where $\sigma$ is bounded or H\"older continuous. Proposition~\ref{prop:weak} guarantees the existence of weak solutions in these cases.
%\end{rem}

\bibliographystyle{apa-good}
\bibliography{bibliography}

\begin{thebibliography}{14}
\expandafter\ifx\csname natexlab\endcsname\relax\def\natexlab#1{#1}\fi
\expandafter\ifx\csname url\endcsname\relax
  \def\url#1{{\tt #1}}\fi
\expandafter\ifx\csname urlprefix\endcsname\relax\def\urlprefix{URL }\fi

\bibitem[{Belomestny et~al.(202?)Belomestny, Gugushvili, Schauer, \&
  Spreij}]{bgsslevy}
Belomestny, D., Gugushvili, S., Schauer, M., \& Spreij, P. (202?).
\newblock {Nonparametric Bayesian volatility estimation for gamma-driven
  stochastic differential equations}.
\newblock {\em Bernoulli (to appear)\/}.
\newline\urlprefix\url{https://arxiv.org/abs/2103.16264}

\bibitem[{Chance et~al.(2008)Chance, Hillebrand, \& Hilliard}]{chance08}
Chance, D.~M., Hillebrand, E., \& Hilliard, J.~E. (2008).
\newblock Pricing an option on revenue from an innovation: {A}n application to
  movie box office revenue.
\newblock {\em Management Science\/}, {\em 54\/}(5), 1015--1028.

\bibitem[{Dufresne et~al.(1991)Dufresne, Gerber, \& Shiu}]{dufresne91}
Dufresne, F., Gerber, H.~U., \& Shiu, E. S.~W. (1991).
\newblock Risk theory with the gamma process.
\newblock {\em ASTIN Bulletin\/}, {\em 21\/}(2), 177--192.

\bibitem[{Jacod \& Shiryaev(2003)}]{JS2003}
Jacod, J., \& Shiryaev, A.~N. (2003).
\newblock {\em Limit theorems for stochastic processes\/}, vol. 288 of {\em
  Grundlehren der Mathematischen Wissenschaften [Fundamental Principles of
  Mathematical Sciences]\/}.
\newblock Springer-Verlag, Berlin, 2nd ed.

\bibitem[{Kallenberg(2002)}]{Kallenberg}
Kallenberg, O. (2002).
\newblock {\em Foundations of modern probability\/}.
\newblock Probability and its Applications (New York). Springer-Verlag, New
  York, 2nd ed.

\bibitem[{Karatzas \& Shreve(1991)}]{karatzas91}
Karatzas, I., \& Shreve, S.~E. (1991).
\newblock {\em Brownian motion and stochastic calculus\/}, vol. 113 of {\em
  Graduate Texts in Mathematics\/}.
\newblock Springer-Verlag, New York, 2nd ed.

\bibitem[{Klebaner \& Liptser(2014)}]{KlebanerLiptser}
Klebaner, F., \& Liptser, R. (2014).
\newblock When a stochastic exponential is a true martingale. {E}xtension of
  the {B}ene\v{s} method.
\newblock {\em Theory Probab. Appl.\/}, {\em 58\/}(1), 38--62.

\bibitem[{Kyprianou(2014)}]{kyprianou14}
Kyprianou, A.~E. (2014).
\newblock {\em Fluctuations of {L}\'evy processes with applications.
  Introductory lectures\/}.
\newblock Universitext. Springer, Heidelberg, 2nd ed.

\bibitem[{L\'{e}pingle \& M\'{e}min(1978)}]{LepingleMemin}
L\'{e}pingle, D., \& M\'{e}min, J. (1978).
\newblock Sur l'int\'{e}grabilit\'{e} uniforme des martingales exponentielles.
\newblock {\em Z. Wahrsch. Verw. Gebiete\/}, {\em 42\/}(3), 175--203.

\bibitem[{Liptser \& Shiryayev(1978)}]{LS1978}
Liptser, R.~S., \& Shiryayev, A.~N. (1978).
\newblock {\em Statistics of random processes. {II}\/}.
\newblock Springer-Verlag, New York-Heidelberg.
\newblock Applications, Translated from the Russian by A. B. Aries,
  Applications of Mathematics, Vol. 6.

\bibitem[{Protter(2004)}]{Protter2004}
Protter, P.~E. (2004).
\newblock {\em Stochastic integration and differential equations\/}, vol.~21 of
  {\em Applications of Mathematics (New York)\/}.
\newblock Springer-Verlag, Berlin, 2nd ed.
\newblock Stochastic Modelling and Applied Probability.

\bibitem[{Reed \& McKelvey(2002)}]{reed02}
Reed, W.~J., \& McKelvey, K.~S. (2002).
\newblock Power-law behaviour and parametric models for the size-distribution
  of forest fires.
\newblock {\em Ecological Modelling\/}, {\em 150\/}(3), 239--254.

\bibitem[{van Noortwijk(2009)}]{noortwijk09}
van Noortwijk, J. (2009).
\newblock A survey of the application of gamma processes in maintenance.
\newblock {\em Reliability Engineering \& System Safety\/}, {\em 94\/}(1),
  2--21.

\bibitem[{Wenocur(1989)}]{wenocour89}
Wenocur, M.~L. (1989).
\newblock A reliability model based on the gamma process and its analytic
  theory.
\newblock {\em Adv. in Appl. Probab.\/}, {\em 21\/}(4), 899--918.

\end{thebibliography}

\end{document}